\newenvironment{tfae}{
\begin{enumerate}}{\end{enumerate}}
\newtheorem{prop}{Proposition}[section]
\newtheorem{lemma}[prop]{Lemma}
\newtheorem{theorem}[prop]{Theorem}
\newtheorem{corollary}[prop]{Corollary}
\theoremstyle{definition}
\newtheorem{definition}[prop]{Definition}
\newtheorem{examples}[prop]{Examples}
\newtheorem{remark}[prop]{Remark}
\def\mathrmdef#1{\expandafter\def\csname#1\endcsname{{\rm#1}}}
\def\mathsfdef#1{\expandafter\def\csname#1\endcsname{{\sf#1}}}
 \def\mathbfdef#1{\expandafter\def\csname#1\endcsname{{\rm\bf#1}}}
\def\bkN{\mathbb{N}}
\def\bkZ{\mathbb{Z}}
\begin{document}  
\title{Right-preordered groups from a categorical perspective}  
\author{Maria Manuel Clementino}
\address{CMUC, Department of Mathematics, University of Coimbra, 3000-143 Coimbra, Portugal}\thanks{}
\email{mmc@mat.uc.pt}

\author{Andrea Montoli}
\address{Dipartimento di Matematica ``Federigo Enriques'', Universit\`{a} degli Studi di Milano, Via Saldini 50, 20133 Milano, Italy}

\email{andrea.montoli@unimi.it}
\thanks{}

\begin{abstract}
We study the categorical properties of right-preordered groups, giving an explicit description of limits and colimits in this category, and studying some exactness properties. We show that, from an algebraic point of view, the category of right-preordered groups shares several properties with the one of monoids. Moreover, we describe split extensions of right-preordered groups, showing in particular that semidirect products of ordered groups have always a natural right-preorder.
\end{abstract}
\subjclass[2020]{06F15, 18E08, 18E13, 06F05}
\keywords{right-preordered group, positive cone, protomodular object, split extension}
\maketitle  
\section{Introduction}
In \cite{CMFM} preordered groups have been studied from a categorical point of view. A preordered group is a group equipped with a preorder (i.e. a reflexive and transitive relation) which is compatible with the group operation both on the left and on the right, in the sense that the group operation is monotone with respect to such preorder. In particular, limits and colimits in the category \OrdGrp\ of preordered groups and monotone group homomorphisms have been explicitly described, and some exactness properties of \OrdGrp\ have been explored. From an algebraic point of view, \OrdGrp\ does not share with the category \Grp\ of groups the strong properties that allow the intrinsic interpretation of homological algebra, like being a protomodular \cite{Bourn protomod} or a Mal'tsev \cite{CLP Maltsev} category. From this perspective, \OrdGrp\ turns out to be more similar to the category \Mon\ of monoids, where such properties hold only ``locally'', i.e. for some ``good'' objects. In the case of $\OrdGrp$, such good objects are those preordered groups whose preorder is symmetric (and hence an equivalence relation). Another interesting aspect of this relatively weak algebraic context is that, on a split extension of groups, whose kernel and codomain are preordered groups, there may be many compatible preorders, turning it to a split extension in $\OrdGrp$, or none. \\

The wider class of the so-called right- (or left-)preordered groups is also interesting for applications in various mathematical fields. These are groups equipped with a preorder which is compatible with the group operation only on the right (or on the left): if $a \leq b$ then, for all $c$, $a+c \leq b+c$ (we are using the additive notation although our groups need not be abelian). In fact, right-orders on a group are related to actions of the group on the real line (see, e.g., \cite{Ghys, ClayRolfsen}). Moreover, right-ordered groups are used to give a description of the free lattice-ordered groups on given groups \cite{Conrad}. Following a similar spirit, in \cite{ColacitoMarra} spaces of right-orders on partially ordered groups have been related to spectral spaces of lattice-ordered groups. In \cite{Glass} several important examples of preorders on groups that are compatible with the group operation only on one side are described. \\

The aim of this paper is to extend the study made in \cite{CMFM} to the category \ROrdGrp\, whose objects are the right-preordered groups and whose arrows are the monotone group homomorphisms. In particular, we observe that \ROrdGrp\ is isomorphic to the category whose objects are pairs $(G,M)$, where $G$ is a group and $M$ is a submonoid of $G$, and whose arrows are group homomorphisms that (co)restrict to the submonoids. Using the good properties of the forgetful functors from \ROrdGrp\ to, on the one hand, \Grp\, and, on the other hand, the category \Ord\ of preordered sets and monotone maps, as well as the ones of the functor associating to every right-preordered group its positive cone, we give a description of limits and colimits in $\ROrdGrp$, and we show that this category has the same exactness properties as $\OrdGrp$. Moreover, we characterize the ``good'' objects, from an algebraic point of view, proving that they are still the groups equipped with a (right-compatible) equivalence relation. Finally, we explore the possible compatible right-preorders on split extensions, showing that, as for \OrdGrp, all such preorders are bounded by the product preorder (a pair is positive if and only if both components are) and the so-called lexicographic preorder. We prove that the existence of a compatible right preorder is equivalent to the fact that the lexicographic one is compatible (extending a result of \cite{CR}). Using the semidirect product construction, we exhibit examples of split extensions which admit compatible right-preorders without admitting preorders that are compatible on both sides.

\section{The category \ROrdGrp\ of right-preordered groups} \label{section first properties}

A \emph{right-preordered group} is a group $X$ together with a preorder (i.e. a reflexive and transitive relation) $\leq$ such that
\[ \forall \ x,y,z\in X, \qquad x\leq y \quad \Rightarrow \quad x+z \leq y+z. \]
A morphism of right-preordered groups is a monotone group homomorphism. We denote the category of right-preordered groups and their morphisms by $\ROrdGrp$. We point out that, when the group is abelian, the seemingly weaker condition of being right-preordered coincides with being a preordered group.

A right-preorder on a group $X$ determines a submonoid of $X$, namely $P_X=\{x\in X\,|\,x\geq 0\}$, also known as the \emph{positive cone} of $X$.

\begin{prop}
For a group $X$, right-preorders on $X$ are in bijective correspondence with submonoids of $X$.
\end{prop}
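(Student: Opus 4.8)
The plan is to construct maps in both directions between the set of right-preorders on $X$ and the set of submonoids of $X$, and to show they are mutually inverse. In one direction, I would send a right-preorder $\leq$ to its positive cone $P_X=\{x\in X\,|\,x\geq 0\}$; the text already asserts this is a submonoid, so I would simply verify that $0\in P_X$ (by reflexivity) and that $P_X$ is closed under the group operation (if $x\geq 0$ and $y\geq 0$, then right-compatibility gives $x+y\geq 0+y=y\geq 0$, and transitivity yields $x+y\geq 0$).

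In the other direction, given a submonoid $M\subseteq X$, I would define a relation by declaring $x\leq_M y$ if and only if $y-x\in M$ (equivalently $y+(-x)\in M$, using additive notation for a possibly non-abelian group). The key steps are then to check that $\leq_M$ is a right-preorder. Reflexivity follows from $0\in M$; transitivity follows from closure of $M$ under addition, since $x\leq_M y$ and $y\leq_M z$ give $y-x\in M$ and $z-y\in M$, whence $(z-y)+(y-x)=z-x\in M$. For right-compatibility, if $x\leq_M y$ then $(y+z)-(x+z)=y+z-z-x=y-x\in M$, so $x+z\leq_M y+z$; here one must be careful, in the non-abelian setting, to compute the difference of the shifted elements correctly and observe that the $z$ cancels.

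Finally I would verify that the two assignments are mutually inverse. Starting from a submonoid $M$ and forming the positive cone of $\leq_M$, I get $\{x\,|\,0\leq_M x\}=\{x\,|\,x-0\in M\}=\{x\,|\,x\in M\}=M$, so one composite is the identity. Starting from a right-preorder $\leq$, passing to $P_X$, and then forming $\leq_{P_X}$, I must check that $x\leq_{P_X}y$ (i.e. $y-x\in P_X$, i.e. $y-x\geq 0$) is equivalent to $x\leq y$; this is exactly where right-compatibility is used crucially, since $y-x\geq 0$ means $y-x\geq 0$ and adding $x$ on the right gives $y\geq x$, while conversely $x\leq y$ gives, upon adding $-x$ on the right, $0=x-x\leq y-x$. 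Thus the composite recovers the original preorder.

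The main obstacle, and the only point requiring genuine care, is the non-abelian bookkeeping: one must consistently use right-differences $y-x$ (not $x-y$ or a left version) so that right-compatibility translates cleanly into closure under the monoid operation, and one must confirm that the cancellation in $(y+z)-(x+z)$ indeed yields $y-x$ rather than some conjugated expression. Everything else is a routine check of reflexivity, transitivity, and the inversion identities.
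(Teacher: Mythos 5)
Your proposal is correct and follows essentially the same route as the paper: positive cone in one direction, the relation $x\leq y \Leftrightarrow y-x\in M$ in the other, with the same non-abelian computation $(y+z)-(x+z)=y+z-z-x=y-x$ for right-compatibility. The only difference is that you explicitly verify the two assignments are mutually inverse, a step the paper leaves implicit; your verification is correct and completes the argument.
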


\begin{proof}
If $\leq$ is a right-preorder on $X$, then, if $x\geq 0$ and $y\geq 0$, $x+y\geq 0+y\geq 0$, hence $P_X=\{x\in X\,|\,x\geq 0\}$ is a submonoid of $X$.

Conversely, given a submonoid $M$ of $X$, define $\leq$ on $X$ by $x\leq y$ if $y-x\in M$. Then $\leq$ is clearly reflexive and transitive: $x\leq y$ and $y\leq z$ implies $y-x\in M$ and $z-y\in M$, hence $(z-y)+(y-x)=z-x\in M$, that is, $x\leq z$. Moreover, for every $x,y,z\in X$, if $y-x\in M$ then $y+z-z-x=(y+z)-(x+z)\in M$; that is, $x\le y$ implies $x+z\leq y+z$ as claimed.
\end{proof}

\begin{remark}
Given two right-preordered groups $X$ and $Y$, a group homomorphism $f\colon X\to Y$ is monotone exactly when $f(P_X)\subseteq P_Y$. Hence the category $\ROrdGrp$ is isomorphic to the category having as objects pairs $(X,M)$, where $X$ is a group and $M$ is a submonoid of $X$, and as morphisms $f\colon(X,M)\to(Y,N)$ group homomorphisms $f\colon X\to Y$ that (co)restrict to $M\to N$. For simplicity, herein we will refer to both the right-preorder on $X$ and its positive cone as a right-preorder on $X$.
\end{remark}

In order to study the behaviour of the category $\ROrdGrp$, we start by observing the following.

\begin{prop}
The (full) inclusion of the category $\OrdGrp$ of preordered groups into\linebreak $\ROrdGrp$ has a left adjoint.
\end{prop}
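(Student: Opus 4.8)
The plan is to exhibit the reflector explicitly, since the bijection of the first Proposition makes both categories concrete. First I would determine which right-preordered groups are actually (two-sided) preordered groups. Under the correspondence $(X,M)$, with $M=P_X$ a submonoid and $x\leq y$ meaning $y-x\in M$, left-compatibility asks that $x\leq y$ imply $z+x\leq z+y$ for all $z$. Computing $(z+y)-(z+x)=z+(y-x)-z$ and letting $m=y-x$ range over all of $M$ (take $x=0$, $y=m$), this condition is exactly: for every $z\in X$ and every $m\in M$ one has $z+m-z\in M$. Hence $\OrdGrp$, as a full subcategory of $\ROrdGrp$, corresponds precisely to those pairs $(X,M)$ in which $M$ is a \emph{normal} submonoid (i.e. invariant under conjugation). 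This characterization is the conceptual core of the argument, and once it is in place the reflector is essentially forced.

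Next I would define the candidate left adjoint $L$ on objects by $L(X,M)=(X,\overline{M})$, where $\overline{M}$ is the submonoid of $X$ generated by all conjugates $z+m-z$ with $z\in X$ and $m\in M$; equivalently, $\overline{M}$ is the smallest normal submonoid of $X$ containing $M$. Note that $L$ leaves the underlying group unchanged and acts as the identity on homomorphisms, so the unit of the prospective adjunction is simply the identity homomorphism $\eta_{(X,M)}\colon(X,M)\to(X,\overline{M})$. Since $M\subseteq\overline{M}$, this identity is monotone, hence a genuine morphism of $\ROrdGrp$: if $y-x\in M\subseteq\overline{M}$ then $x\leq y$ already in $(X,\overline{M})$.

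It then remains to verify two points. First, that $L(X,M)$ really lies in $\OrdGrp$, i.e. that $\overline{M}$ is a normal submonoid: for any $h\in X$, conjugation by $h$ is a group automorphism of $X$ which maps the generating set $\{z+m-z\}$ onto itself (as $h+(z+m-z)-h=(h+z)+m-(h+z)$), and therefore maps the submonoid it generates onto itself. Second, the universal property: given a morphism $f\colon(X,M)\to(Y,N)$ in $\ROrdGrp$ with $(Y,N)$ a preordered group (so $N$ is normal), I must check that $f$ factors through $\eta_{(X,M)}$. Because $\eta$ is the identity on the group, the only candidate factorization is $f$ itself, which gives uniqueness immediately; and $f$ is indeed a morphism $(X,\overline{M})\to(Y,N)$ in $\OrdGrp$, since $f(M)\subseteq N$ together with normality of $N$ forces $f(z+m-z)=f(z)+f(m)-f(z)\in N$, whence $f(\overline{M})\subseteq N$. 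This establishes the natural bijection $\ROrdGrp((X,M),(Y,N))\cong\OrdGrp(L(X,M),(Y,N))$, i.e. that $L$ is left adjoint to the inclusion.

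I expect the only delicate point to be the identification of two-sided compatibility with normality of the positive cone, which requires some care with the non-abelian additive bookkeeping for inverses and conjugates; after that, verifying that $\overline{M}$ is a normal submonoid and that morphisms into normal cones automatically respect $\overline{M}$ is routine. (As a consistency check, when $X$ is abelian every submonoid is normal, so $\overline{M}=M$ and $L$ is the identity, matching the earlier remark that right-preordered and preordered coincide in the abelian case.)
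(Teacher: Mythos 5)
Your proposal is correct and follows essentially the same route as the paper: the reflector fixes the underlying group and replaces the positive cone by the smallest conjugation-closed (normal) submonoid containing it, with the identity homomorphism as unit. You simply spell out in full the details the paper leaves as "easy to check" (the characterization of $\OrdGrp$-objects via normality of the cone, normality of the generated submonoid, and the universal property).
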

\begin{proof}
Given a right-preordered group $X$, with positive cone $P_X$, form the least submonoid $\widetilde{P}_X$ of $X$ closed under conjugation and containing $P_X$. Define $F\colon\ROrdGrp\to\OrdGrp$ by $F(X,P_X)=(X,\widetilde{P}_X)$ and $F(f)=f$; it is easy to check that from $f(P_X)\subseteq P_Y$ it follows that $f(\widetilde{P}_X)\subseteq\widetilde{P}_Y$. The identity maps $(X,P_X)\to(X,\widetilde{P}_X)$ are, by construction of $\widetilde{P}_X$, morphisms in $\ROrdGrp$ which define pointwise the unit of the adjunction.
\end{proof}

The functor $\P \colon \ROrdGrp \to \Mon$, which sends a right-preordered group to its positive cone, factors through the category $\Mon_\can$
of monoids with cancellation, since every submonoid of a group satisfies both left- and right-cancellation properties. The functor
$\P_0 \colon \ROrdGrp \to \Mon_\can$ has a left adjoint:
\[ \xymatrix{ \Mon_\can \ar@/^/[rr]^{\roGp} \ar@{}[rr]|-{\bot}& & \ROrdGrp, \ar@/^/[ll]^{\P_0} } \]
which is constructed by considering the group completion $\Gp(M)$ of any cancellative monoid $M$, with the preorder determined by the image of $M$ into $\Gp(M)$ via the unit of the group completion adjunction. The verification that this construction gives the left adjoint to $\P_0$ is essentially the same as the one described in \cite{CMFM} for preordered groups. Composing this adjunction with the one described in the previous proposition, we get precisely the adjunction (A) in \cite{CMFM}. Following the same arguments as in \cite[Proposition 3.1]{CMFM}, we conclude that any monoid $M$ which is embeddable in a group is embeddable in its group completion, and this is enough to conclude that it is the positive cone of $\roGp(M)$. \\

It is well-known that the forgetful functor $\Grp\to\Set$ is monadic and the forgetful functor $\Ord\to\Set$ is topological. As for the case of $\OrdGrp$ (see \cite[Proposition 2.3]{CMFM}) we have the following results, whose proofs follow arguments similar to those used for $\OrdGrp$.

\begin{prop}\label{top}
Consider the forgetful functors $\U_1\colon\ROrdGrp\to\Grp$ and $\U_2\colon\ROrdGrp\to\Ord$,
with $\U_1$ forgetting the preorder and $\U_2$ the group structure.
The functor $\U_1$ is topological while $\U_2$ is monadic. We have therefore the following commutative diagram
\[\xymatrix{&\ROrdGrp\ar[ld]_{\mbox{(topological) }\U_1\;}\ar[rd]^{\;\U_2 \mbox{ (monadic)}}\\
\Grp\ar[dr]_{\mbox{(monadic) } |\;\;|\;}&&\Ord\ar[dl]^{\;|\;\;|\mbox{ (topological)}}\\
&\Set}\]
\end{prop}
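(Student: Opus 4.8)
The plan is to prove two independent statements: that $\U_1$ is topological and that $\U_2$ is monadic. I would treat these separately, since they rely on different machinery.

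For the claim that $\U_1 \colon \ROrdGrp \to \Grp$ is \emph{topological}, the natural approach is to verify the defining lifting property for $\U_1$-structured sources directly. By the description of objects as pairs $(X,M)$ with $M$ a submonoid of $X$ (equivalently, a right-preorder), the fibre over a fixed group $X$ is a partially ordered class: right-preorders on $X$ are ordered by inclusion of their positive cones, and since submonoids of $X$ are closed under arbitrary intersections, this fibre is a complete lattice. Given a group $X$ and a family of morphisms $f_i \colon X \to \U_1(Y_i,N_i)$ in $\Grp$, I would construct the initial right-preorder on $X$ making all the $f_i$ monotone: its positive cone is $\bigcap_i f_i^{-1}(N_i)$, which is a submonoid of $X$ because each $f_i^{-1}(N_i)$ is a submonoid (preimage of a submonoid under a homomorphism) and submonoids are closed under intersection. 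First I would check this intersection is indeed a submonoid, then verify the universal property: a group homomorphism $g \colon Z \to X$ lifts to a morphism $(Z,L) \to (X,\bigcap_i f_i^{-1}(N_i))$ in $\ROrdGrp$ if and only if every composite $f_i \circ g$ is monotone. This is a routine unwinding of the condition $g(L) \subseteq \bigcap_i f_i^{-1}(N_i) \Leftrightarrow f_i(g(L)) \subseteq N_i$ for all $i$.

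For the claim that $\U_2 \colon \ROrdGrp \to \Ord$ is \emph{monadic}, I would invoke Beck's monadicity theorem and check its hypotheses. The key observations are that $\U_2$ has a left adjoint (the free right-preordered group on a preordered set, obtained by equipping the free group on the underlying set with the submonoid generated by the preorder data, analogous to the construction in \cite{CMFM}), that $\U_2$ reflects isomorphisms, and that $\ROrdGrp$ has and $\U_2$ preserves coequalizers of $\U_2$-split pairs. The first two are comparatively direct: a morphism that is a monotone group homomorphism and whose underlying monotone map is an order-isomorphism is itself an isomorphism in $\ROrdGrp$, since the inverse group homomorphism is automatically monotone. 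The existence of the left adjoint I would either construct explicitly or cite as following the same pattern as for $\OrdGrp$.

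The main obstacle, as in most monadicity arguments, will be the creation of $\U_2$-split coequalizers. Given a parallel pair in $\ROrdGrp$ whose image under $\U_2$ admits a split coequalizer in $\Ord$, I must produce a coequalizer in $\ROrdGrp$ lying over it. The group-theoretic part is unproblematic because $\Grp$ is itself monadic over $\Set$, so the relevant coequalizer exists at the level of groups; the delicate point is verifying that the right-preorder (positive cone) transports correctly along the split coequalizer and that the resulting submonoid is exactly the one dictated by the split-coequalizer data in $\Ord$, so that the coequalizer is genuinely preserved and created by $\U_2$. I would handle this by exploiting the splitting maps to transfer the preorder structure explicitly, closely following \cite[Proposition 2.3]{CMFM}, where the identical argument is carried out for $\OrdGrp$; the one-sided compatibility of the preorder does not interfere with this part of the reasoning.
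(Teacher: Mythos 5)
Your proposal is correct and follows essentially the same route as the paper: the initial lift for $\U_1$ is exactly the intersection $\bigcap_i f_i^{-1}(P_{X_i})$ of preimages of positive cones, and monadicity of $\U_2$ is established via Beck's theorem with the same left adjoint (free group with the submonoid generated by the preorder data), the same isomorphism-reflection check, and the same handling of $\U_2$-split coequalizers by reducing to the underlying split pair in $\Set$ and the argument already carried out for $\OrdGrp$ in \cite{CMFM}. The only cosmetic difference is that the paper obtains existence of the relevant coequalizers from cocompleteness of $\ROrdGrp$ (which follows from topologicity of $\U_1$) and then chases the commutative square of forgetful functors, rather than transporting the preorder along the splitting by hand.
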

\begin{proof}
\emph{$\U_1$ is a topological functor}: let $(f_i\colon X\to X_i)_{i\in I}$ be a family of group homomorphisms where each $X_i$, for $i\in I$, is a right-preordered group. Then $P_X=\{x\in X\,|\, f_i(x)\in P_{X_i}$ for every $i\in I\}$ is a submonoid of $X$ such that $f_i(P_X)\subseteq P_{X_i}$, for all $i\in I$. This defines the $\U_1$-initial lifting for $(f_i)$.

\emph{$\U_2$ is a monadic functor}: To prove this we will use \cite[Theorem 2.4]{VAtlantis}.

\noindent (a) \emph{$\U_2$ has a left adjoint} $\L_2 \colon \Ord \to \ROrdGrp$: $\L_2$ assigns to each preorder $A$ the free group $\F(A)$ on the set $A$ equipped with the right-preorder induced by the submonoid of $\F(A)$ generated by the elements of the form $b-a$ for all $a,b\in A$ with $a\leq b$. It is easy to check that $\L_2$ is a functor which is left adjoint to $\U_2$.

\noindent (b) \emph{$\U_2$ reflects isomorphisms}: a morphism $f\colon X\to Y$ in \ROrdGrp\ with $\U_2(f)$ an isomorphism in \Ord\ is a bijective homomorphism whose inverse is both a homomorphism and monotone, hence $f$ is an isomorphism in \ROrdGrp.

\noindent (c) \emph{\ROrdGrp\ has and $\U_2$ preserves coequalizers of all $\U_2$-contractible coequalizer pairs.} First of all \ROrdGrp\ is cocomplete, since it is topological over a cocomplete category. Given morphisms $f,g\colon X\to Y$ in \ROrdGrp\ with $\U_2(f),\U_2(g)$ a contractible pair in \Ord, their coequalizer $q\colon Y\to Q$ in \ROrdGrp\ is preserved by $\U_1$, and so also by $|\;\;|\cdot \U_1$ because $|\;\;|$ is monadic and $|\U_1(f)|, |\U_1(g)|$ form a contractible pair in \Set. Therefore $\U_2(q)$ is the coequalizer of $|\U_2(f)|, |\U_2(g)|$ in \Ord, since it is a split epimorphism and $|\U_2(q)|$ is the coequalizer of $\U_2(f),\U_2(g)$ in \Set.
\end{proof}

The properties of $\U_1$ and $\U_2$ give important information on the categorical behaviour of \ROrdGrp.

\begin{remark}\label{re:properties}
\begin{enumerate}
\item Topologicity of $\U_1\colon\ROrdGrp\to\Grp$ guarantees that it has both a left and a right adjoint. The former one equips a group $X$ with the discrete order, so that $P_X=\{0\}$, while the latter one equips a group with the total preorder (so that $P_X=X$). Moreover, with $\Grp$ complete and cocomplete, also $\ROrdGrp$ is complete and cocomplete.

\item Both $\U_1\colon\ROrdGrp\to\Grp$ and $\U_2\colon\ROrdGrp\to\Ord$ preserve limits, which gives us a complete description of limits in $\ROrdGrp$: algebraically they are formed like in $\Grp$, and then equipped with the limit preorder.

\item The forgetful functor $\P\colon \ROrdGrp\to\Mon$ which assigns to each right-preordered group its positive cone, and to each morphism its (co)restriction to the positive cones, preserves limits and coproducts, but not coequalizers.
Indeed, as for \OrdGrp, the positive cone of a product of right-preordered groups is the product of their positive cones, and analogously for equalizers. For coproducts the situation differs from what happens in \OrdGrp, where the coproduct of positive cones does not need to be closed under conjugation as a submonoid of the coproduct of the preordered groups (see, for instance, \cite[Example 2.10]{CMFM}). On the contrary, in \ROrdGrp\ the positive cone of a coproduct is the coproduct of the positive cones, just because the coproduct of submonoids is a submonoid of the coproduct.

We point out that the functor $\P$ does not preserve coequalizers; for instance, the coequalizer of the pair of morphisms $f,g\colon(\bkZ,0)\to(\bkZ,\bkN)$, with $f(1)=1$ and $g(1)=2$, is the constant morphism into $\{0\}$, while the coequalizer of $\P(f),\P(g)\colon\{0\}\to\bkN$ is the identity on $\bkN$.

\item Analogously to the case of \OrdGrp\ (see \cite[Remark 2.4]{CMFM}), given a morphism $f\colon (X,P_X)\to (Y,P_Y)$ in \ROrdGrp:
\begin{enumerate}
\item $f$ is an epimorphism if and only if $f$ is surjective (and epimorphisms are stable under pullback);
\item $f$ is a regular epimorphism if and only if both $f$ and $\P(f)$ are surjective (and regular epimorphisms coincide with pullback-stable regular epimorphisms, and with strong epimorphisms, and with extremal epimorphisms);
\item $f$ is a monomorphism if and only if $f$ is injective;
\item $f$ is a regular monomorphism if and only if $f$ is injective and $P_X=f^{-1}(P_Y)$.
\end{enumerate}
This gives us two proper and stable factorization systems in $\ROrdGrp$, \emph{(Epi, Reg Mono)} and \emph{(Reg Epi, Mono)}, factoring each morphism as outlined in the following diagram:
\[\xymatrix{(X,P_X)\ar[rr]^-f\ar[dr]_e&&(Y,P_Y)&(X,P_X)\ar[rr]^-f\ar[rd]_{e'}&&(Y,P_Y)\\
&(f(X),P_Y\cap f(X))\ar[ru]_m&&&(f(X),f(P_X))\ar[ru]_{m'}}\]
\end{enumerate}
\end{remark}

Following the arguments used in the proofs of \cite[Proposition 2.5 and Remark 2.6]{CMFM}, we can easily deduce that:
\begin{prop}
The category \ROrdGrp:
\begin{enumerate}
\item is a regular category;
\item is a normal category;
\item is an efficiently regular -- but not exact -- category;
\end{enumerate}
\end{prop}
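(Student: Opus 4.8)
The plan is to treat the three assertions separately, in each case reducing to the explicit descriptions of (regular) monomorphisms, (regular) epimorphisms and limits collected in Remark~\ref{re:properties}, together with the fact that $\U_1$ is topological. For (1), I would argue that $\ROrdGrp$ is finitely complete (Remark~\ref{re:properties}(1)), that every morphism admits the (Reg Epi, Mono) factorization of Remark~\ref{re:properties}(4), and that regular epimorphisms are pullback-stable; these three facts are exactly regularity. The only point needing a computation is pullback-stability, where I would use that $\U_1$ preserves limits: a pullback is formed at the level of groups and endowed with the initial preorder, so the positive cone of $X\times_Y Z$ is $(P_X\times P_Z)\cap(X\times_Y Z)$. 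Given a regular epi $f\colon(X,P_X)\to(Y,P_Y)$, i.e. $f$ and $\P(f)$ both surjective, and any $g\colon(Z,P_Z)\to(Y,P_Y)$, I would check that the projection $\pi_2$ onto $(Z,P_Z)$ and $\P(\pi_2)$ are surjective: for $z\in P_Z$ one has $g(z)\in P_Y$, and surjectivity of $\P(f)$ yields $x\in P_X$ with $f(x)=g(z)$, so $(x,z)$ lies in the positive cone of the pullback and maps onto $z$.

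For (2), first note that $\ROrdGrp$ is pointed, with zero object the trivial group $(\{0\},\{0\})$, and is regular by (1); it remains to show every regular epimorphism is normal, i.e. is the cokernel of its kernel. Since $\U_1$ is topological it preserves colimits, so a cokernel is computed at the group level and then equipped with the final preorder. The computation I expect to carry out is that the final positive cone along a quotient $q\colon(X,P_X)\to Q$ is the submonoid of $Q$ generated by $q(P_X)$. Applying this to a regular epi $f$ with kernel $k$: at the group level $f$ is the cokernel of $k$, and the final positive cone is the submonoid generated by $f(P_X)$. The crucial point is that for a regular epi $\P(f)$ is surjective, so $f(P_X)=P_Y$ is \emph{already} a submonoid; hence the final positive cone equals $P_Y$ and $f$ is exactly the cokernel of $k$, proving normality.

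For the first half of (3), efficient regularity, I would spell out the hypotheses concretely. An equivalence relation $R$ on $(X,P_X)$ has underlying congruence a normal subgroup $N_R$, with $\widehat R=\{(a,b)\mid a-b\in N_R\}$ its underlying group and $P_R\subseteq\{(a,b)\mid a-b\in N_R,\ a,b\in P_X\}$ its positive cone. If $R\rightarrowtail S$ is an inclusion of equivalence relations on $(X,P_X)$ with $S$ effective and the inclusion a regular monomorphism, then Remark~\ref{re:properties}(4) gives $P_R=P_S\cap\widehat R$, while $S$ being a kernel pair gives $P_S=\{(a,b)\mid a-b\in N_S,\ a,b\in P_X\}$. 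Since $N_R\subseteq N_S$, intersecting yields $P_R=\{(a,b)\mid a-b\in N_R,\ a,b\in P_X\}$, which is precisely the positive cone of the kernel pair of the quotient $(X,P_X)\to X/N_R$; as the underlying groups also match, $R$ is this kernel pair, hence effective. For the second half, to see $\ROrdGrp$ is not exact I would exhibit a non-effective equivalence relation. Taking $(X,P_X)=(\bkZ,\bkN)$, consider $R$ with underlying group $\bkZ\times\bkZ$ and positive cone $P_R=\{(0,0)\}\cup\{(a,b)\mid a,b\geq 1\}$. I would verify that $P_R$ is a submonoid of $\bkZ\times\bkZ$ contained in $\bkN\times\bkN$, symmetric, containing the positive diagonal, and transitive, so that $R$ is a genuine internal equivalence relation, and that it generates $\bkZ\times\bkZ$, so its coequalizer is the map to the zero object. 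The kernel pair of that coequalizer has positive cone $\bkN\times\bkN\neq P_R$ (e.g. $(1,0)$ is missing from $P_R$), so $R$ is not effective.

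The main obstacle I anticipate is getting the definition of efficient regularity exactly right and seeing that the regular-monomorphism hypothesis on $R\rightarrowtail S$ is precisely what forces the saturation $P_R=P_S\cap\widehat R$, thereby pushing $P_R$ into kernel-pair form; for the non-exactness statement, the delicate verification is the internal transitivity condition $(a,b),(b,c)\in P_R\Rightarrow(a,c)\in P_R$ for the candidate relation, which must be checked case by case.
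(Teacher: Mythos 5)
Your proposal is correct and follows essentially the same route as the paper, which simply invokes the arguments of Propositions 2.5 and Remark 2.6 of \cite{CMFM}: regularity via the (Reg Epi, Mono) factorization and pullback-stability computed on positive cones, normality via cokernels carrying the image cone, efficient regularity via the saturation condition $P_R=P_S\cap\widehat{R}$ forcing kernel-pair form, and non-exactness via an equivalence relation on $(\bkZ,\bkN)$ whose cone is strictly smaller than $\bkN\times\bkN$. Your write-up is a correct self-contained expansion of exactly those arguments, and all the computations you flag as delicate (stability of the pullback cone, transitivity of the candidate relation) check out.
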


We refer to \cite{Barr, ZJanelidze2010, Bourn Baersums} for the definitions of exact, regular, normal, and efficiently regular category.

As outlined in \cite[Propositions 2.7, 2.8]{CMFM}, in every efficiently regular category the change-of-base functor induced by a regular epimorphism is monadic. Therefore:

\begin{corollary}
In \ROrdGrp\ a morphism is effective for descent if and only if it is a regular epimorphism.
\end{corollary}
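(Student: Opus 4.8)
The plan is to recognize the statement as a direct combination of the efficient regularity of $\ROrdGrp$, established in the previous proposition, with the general correspondence between monadicity of change-of-base functors and effective descent. Recall first that, by definition, a morphism $f\colon E\to B$ is \emph{effective for descent} precisely when the change-of-base (pullback) functor $f^{*}\colon\ROrdGrp/B\to\ROrdGrp/E$ is monadic. Thus both implications reduce to comparing the property ``$f$ is a regular epimorphism'' with the property ``$f^{*}$ is monadic''.

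For the implication from regular epimorphisms to effective descent I would argue directly. Since the previous proposition shows that $\ROrdGrp$ is efficiently regular, the quoted consequence of \cite[Propositions 2.7, 2.8]{CMFM} applies without change: the change-of-base functor $f^{*}$ induced by any regular epimorphism $f$ is monadic. By the definition just recalled, this says exactly that $f$ is effective for descent. All the real work for this direction is already contained in the verification of efficient regularity, so nothing further is needed.

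For the converse I would not use any feature special to $\ROrdGrp$, but only the standard descent-theoretic hierarchy. If $f$ is effective for descent, then $f^{*}$ is monadic, so the comparison functor into the category of descent data is an equivalence and, a fortiori, fully faithful; hence $f$ is a descent morphism, and every descent morphism in a category with pullbacks is a pullback-stable regular epimorphism, in particular a regular epimorphism. Moreover, by Remark~\ref{re:properties}(b), in $\ROrdGrp$ the regular epimorphisms already coincide with the pullback-stable ones, so the two formulations are interchangeable here.

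Since both directions are assemblies of results that are either cited or proved earlier, I do not anticipate a genuine obstacle. The one point deserving care is purely definitional: one must fix ``effective for descent'' to mean monadicity of $f^{*}$ (rather than the weaker premonadicity condition defining mere descent morphisms), because it is exactly this reading that makes the forward implication coincide with the quoted monadicity statement and the backward implication reduce to the fact that effective descent morphisms are regular epimorphisms.
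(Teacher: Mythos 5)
Your proposal is correct and follows exactly the route the paper intends: the forward direction is the quoted consequence of efficient regularity (regular epimorphisms have monadic change-of-base functors), and the converse is the standard fact that effective descent morphisms are descent morphisms, hence (pullback-stable) regular epimorphisms in a regular category. The paper leaves this argument implicit after the preceding proposition, so your write-up is simply a careful expansion of the same reasoning.
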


\begin{remark}
As observed in Remark \ref{re:properties}, exactly as for \OrdGrp, a morphism $f\colon X\to Y$ in \ROrdGrp\ is a strong epimorphism (or extremal, or regular, epimorphism) if and only if both $f$ and its (co)restriction to the positive cones are surjective.

In \cite[Remark 2.9]{CMFM} we pointed out that this does not extend to families of morphisms in \OrdGrp; that is, for a family of morphisms, to be jointly strongly epimorphic in \OrdGrp\ does not imply the family of (co)restrictions to the positive cones to be jointly strongly epimorphic. The situation in \ROrdGrp\ differs from this one: a family of morphisms $(f_i\colon X_i\to X)_{i\in I}$ is jointly strongly epimorphic in \ROrdGrp\ if and only if
\begin{enumerate}
\item $(f_i\colon X_i\to X)_{i\in I}$ is jointly strongly epimorphic in \Grp;
\item $(f_i\colon P_{X_i}\to P_X)_{i\in I}$ is jointly strongly epimorphic in \Mon.
\end{enumerate}
Indeed, if (1) does not hold, given a factorization of $(f_i)$ through a monomorphism in \Grp\
\[\xymatrix{X_i\ar[rr]^{f_i}\ar[rd]_{g_i}&&X\\
&Y\ar[ru]_m}\]
we can equip $Y$ with $P_Y=m^{-1}(P_X)$, making this factorization live in \ROrdGrp. If (2) does not hold, so that we can obtain a factorization
\[\xymatrix{P_{X_i}\ar[rr]^{P(f_i)}\ar[rd]_{P(g_i)}&&P_X\\
&M\ar[ru]_m}\]
with $m$ monic (and not an isomorphism), we can factor $(f_i)$ in \ROrdGrp\ as follows
\[\xymatrix{(X_i,P_{X_i})\ar[rr]^{f_i}\ar[rd]_{f_i}&&(X,P_X)\\
&(X,M)\ar[ru]_{1_X}}\]
showing that $(f_i)$ is not jointly strongly epimorphic. The converse implication is obvious.
\end{remark}

\section{Algebraic properties of $\ROrdGrp$} \label{section algebraic properties}
It was observed in \cite{CMFM} that the category $\OrdGrp$ of preordered groups is unital \cite{Bourn unital} and weakly protomodular, but not protomodular \cite{Bourn protomod}. The same arguments work for $\ROrdGrp$, showing that the category of right-preordered groups has the same properties. As it was done in \cite{CMFM} for $\OrdGrp$, it becomes then interesting to characterize those objects that, locally, have a stronger categorical-algebraic behaviour. In order to do that, we recall the following notions.

\begin{definition}
A point (i.e. a split epimorphism with a fixed section) $\xymatrix{ A \ar@<-2pt>[r]_f & B \ar@<-2pt>[l]_s }$ with kernel $k \colon X \to A$ in a pointed finitely complete category is \emph{strong} if $k$ and $s$ are jointly strongly epimorphic. It is \emph{stably strong} if every pullback of it along any morphism $g \colon C \to B$ is strong.
\end{definition}

\begin{definition}[\cite{MRVdL objects}]
An object $Y$ of a finitely complete category $\C$ is
\begin{itemize}
\item[(1)] a \emph{strongly unital object} if the point $\xymatrix{ Y \times Y \ar@<-2pt>[r]_-{\pi_2} & Y \ar@<-2pt>[l]_-{\langle 1, 1 \rangle} }$ is stably strong;

\item[(2)] a \emph{Mal'tsev object} if, for every pullback of split
epimorphisms over $Y$ as in the following diagram
\[
\vcenter{\xymatrix@!0@=5em{ A\times_{Y}C \ar@<-.5ex>[d]_{\pi_A}
\ar@<-.5ex>[r]_(.7){\pi_C} & C \ar@<-.5ex>[d]_g
\ar@<-.5ex>[l]_-{\langle sg,1_C \rangle} \\
A \ar@<-.5ex>[u]_(.4){\langle 1_A,tf \rangle} \ar@<-.5ex>[r]_f &
Y, \ar@<-.5ex>[l]_s \ar@<-.5ex>[u]_t }} \] the morphisms $\langle
1_{A}, tf \rangle$ and $\langle sg, 1_{C} \rangle$ are jointly
strongly epimorphic;

\item[(3)] a \emph{protomodular} object if every point over $Y$ is stably strong.
\end{itemize}
\end{definition}

In a unital category, an object is strongly unital if, and only if, it is \emph{gregarious} in the sense of \cite[Definition 1.9.1]{BBbook}. Strongly unital objects in the category $\Mon$ of monoids are then characterized in \cite[Proposition 1.9.2]{BBbook}: a monoid $M$ is strongly unital if and only if for any $m$ in $M$ there exist $u, v \in M$ such that $u + m + v = 0$. Clearly, every group satisfies this condition. An important fact for us is that every cancellative, strongly unital monoid is a group. The proof of this fact that we present here was suggested to us by Alfredo Costa, to whom we are grateful.

\begin{lemma} \label{gregarious + cancellative = group}
Every cancellative, strongly unital monoid $M$ is a group.
\end{lemma}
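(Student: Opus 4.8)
The plan is to fix an arbitrary element $m \in M$ and manufacture a two-sided inverse for it, combining the characterization of strongly unital monoids recalled just above with the fact that $M$ is cancellative on both sides. By that characterization there exist $u, v \in M$ with $u + m + v = 0$. This single relation says that $u$ is a left inverse of $m + v$ and that $v$ is a right inverse of $u + m$, but it does not directly invert $m$ itself. The key observation I would exploit is that, in a cancellative monoid, a relation $u + m + v = 0$ can be \emph{cyclically rotated}: the goal is to upgrade it to the two relations $m + v + u = 0$ and $v + u + m = 0$, which together assert that the single element $v + u$ is both a right and a left inverse of $m$.

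First I would derive $m + v + u = 0$. Adding $u$ on the right of the given relation gives $u + m + v + u = u$, that is $u + (m + v + u) = u + 0$, and cancelling $u$ on the left yields $m + v + u = 0$. Symmetrically, adding $v$ on the left of the given relation gives $v + u + m + v = v$, that is $(v + u + m) + v = 0 + v$, and right cancellation yields $v + u + m = 0$. Each derivation uses the identity law to rewrite $u$ as $u + 0$ (respectively $v$ as $0 + v$) before applying the appropriate one-sided cancellation.

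Putting these together, $m + (v + u) = 0$ and $(v + u) + m = 0$, so $v + u$ is simultaneously a right and a left inverse of $m$; hence $m$ is invertible. Since $m \in M$ was arbitrary, every element of $M$ has an inverse, and $M$ is a group. The part I expect to be the only real hurdle is the conceptual step at the start: recognizing that two-sided cancellation is exactly what lets one cyclically permute the three factors in a product equal to $0$, turning the awkward ``$m$ in the middle'' relation $u + m + v = 0$ into genuine one-sided inverse relations. Once that rotation trick is spotted, the remaining computations are routine applications of cancellation, with no further obstacle.
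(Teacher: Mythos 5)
Your proof is correct and follows essentially the same strategy as the paper's: starting from $u+m+v=0$, use two-sided cancellation to rotate the relation into $m+v+u=0$ and $v+u+m=0$, exhibiting $v+u$ as a two-sided inverse of $m$. The only (immaterial) difference is the intermediate manipulation: you append a single factor and cancel once, whereas the paper duplicates the whole relation before cancelling.
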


\begin{proof}
Let $m \in M$ and let $u,v \in M$ be such that $u+m+v=0$. We show that $v+u$ is the inverse of $m$. From $u+m+v=0$ we get
\[ u+m+v= 0 = u+m+v+u+m+v; \]
cancelling $u$ on the left and $v$ on the right we get
\[ m = 0 + m = m+v+u+m. \]
Now, cancelling $m$ on the right we obtain $m+v+u = 0$. Similarly, one gets $v+u+m = 0$.
\end{proof}

\begin{theorem}
For a right-preordered group $Y$, the following conditions are equivalent:
\begin{tfae}
\item $Y$ is a protomodular object in $\ROrdGrp$;
\item $Y$ is a Mal'tsev object in $\ROrdGrp$;
\item $Y$ is a strongly unital object in $\ROrdGrp$;
\item $P_Y$ is a group;
\item the preorder relation on $Y$ is an equivalence relation.
\end{tfae}
\end{theorem}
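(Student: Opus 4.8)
The plan is to prove the cycle (i) $\Rightarrow$ (ii) $\Rightarrow$ (iii) $\Rightarrow$ (iv) $\Rightarrow$ (i), together with the equivalence (iv) $\Leftrightarrow$ (v). The first two implications require no work specific to $\ROrdGrp$: the category is pointed (the trivial group is a zero object) and finitely complete (Remark~\ref{re:properties}), so the general hierarchy of \cite{MRVdL objects} applies, giving that every protomodular object is a Mal'tsev object and every Mal'tsev object is a strongly unital object. Thus the whole content is concentrated in the two implications (iii) $\Rightarrow$ (iv) and (iv) $\Rightarrow$ (i), plus the bookkeeping equivalence with (v).

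The equivalence (iv) $\Leftrightarrow$ (v) is immediate from the dictionary of Section~\ref{section first properties}: the preorder is recovered from $P_Y$ by $x \leq y \Leftrightarrow y-x \in P_Y$, and it is always reflexive and transitive. It is symmetric --- hence an equivalence relation --- exactly when $y-x \in P_Y$ implies $x-y = -(y-x) \in P_Y$, that is, exactly when $P_Y$ is closed under inverses, which for a submonoid means precisely that $P_Y$ is a subgroup of $Y$.

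For (iii) $\Rightarrow$ (iv) I would exploit the characterisation of jointly strongly epimorphic families given at the end of Section~\ref{section first properties}: a pair of morphisms is jointly strongly epimorphic in $\ROrdGrp$ precisely when it is so in $\Grp$ and its restriction to the positive cones is so in $\Mon$, the first condition being automatic because $\Grp$ is protomodular. Applying this to the canonical point $\pi_2 \colon Y\times Y \to Y$ with section $\langle 1,1\rangle$ (whose kernel is $y \mapsto (y,0)$), strong unitality of $Y$ forces the two monoid maps $a \mapsto (a,0)$ and $a \mapsto (a,a)$ from $P_Y$ to $P_Y \times P_Y = \P(Y\times Y)$ to be jointly strongly epimorphic in $\Mon$, i.e. their images generate $P_Y \times P_Y$. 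Reading an expression of an element $(0,m)$ as a product of these generators and projecting onto the two coordinates shows that $P_Y$ is a strongly unital (gregarious) monoid in the sense of \cite[Proposition 1.9.2]{BBbook}; since $P_Y$ is cancellative, Lemma~\ref{gregarious + cancellative = group} then yields that $P_Y$ is a group.

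For (iv) $\Rightarrow$ (i) I would run the same reduction in reverse. If $P_Y$ is a group, then by \cite{MRVdL objects} it is a protomodular object of $\Mon$. Given any point over $Y$ in $\ROrdGrp$ and any of its pullbacks along a morphism into $Y$, the functor $\P$ preserves these limits (Remark~\ref{re:properties}), so applying $\P$ produces a pullback of a point over $P_Y$ in $\Mon$, which is strong because $P_Y$ is protomodular; the underlying $\Grp$-part is again automatic. By the same characterisation of jointly strongly epimorphic pairs, the original pullback point is strong in $\ROrdGrp$, so every point over $Y$ is stably strong and $Y$ is a protomodular object. The main obstacle is to make this transfer between object-properties in $\ROrdGrp$ and in $\Mon$ fully rigorous --- in particular to check that $\P$ sends the pullbacks witnessing stability in $\ROrdGrp$ to pullbacks of the required form in $\Mon$, and to confirm that the $\Grp$-component of joint strong epimorphy is genuinely free throughout; once this is secured, the two hard implications close the cycle.
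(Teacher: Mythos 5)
Your overall architecture --- the cycle (i)$\Rightarrow$(ii)$\Rightarrow$(iii)$\Rightarrow$(iv)$\Rightarrow$(i) together with the easy (iv)$\Leftrightarrow$(v), the first two implications quoted from \cite{MRVdL objects} --- is exactly the paper's, but one of the two hard implications has a gap as written. In (iii)$\Rightarrow$(iv) you work with the point $\pi_2\colon Y\times Y\to Y$ itself and assert that expressing $(0,m)$ in the submonoid generated by the images of $a\mapsto(a,0)$ and $a\mapsto(a,a)$ and ``projecting onto the two coordinates'' exhibits $P_Y$ as gregarious. It does not, at least not directly: such an expression has the shape $(y_1,0)+(m_1,m_1)+\cdots+(y_k,0)+(m_k,m_k)$, so projection gives $y_1+m_1+\cdots+y_k+m_k=0$ together with $m_1+\cdots+m_k=m$; the element $m$ appears chopped into pieces interleaved with the $y_i$'s, which is not the contiguous form $u+m+v=0$ required by \cite[Proposition 1.9.2]{BBbook}. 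This is precisely why the paper uses \emph{stable} strength and pulls the point back along $j\colon\langle b\rangle\hookrightarrow Y$ with positive cone $\{nb\mid n\in\bkN\}$: the second-coordinate equation $n_1+\cdots+n_k=1$ then forces exactly one $n_i=1$, producing a single contiguous occurrence of $b$ and hence $u+b+v=0$ on the nose. Your version can be repaired without any pullback, but only by exploiting that $P_Y$ is a submonoid of the group $Y$: if $a_1+\cdots+a_n=0$ with all $a_i\in P_Y$, then $-a_i$ equals the sum of the remaining terms taken cyclically and so lies in $P_Y$; applied to the $m_i$ this gives $-m\in P_Y$ directly (at which point Lemma \ref{gregarious + cancellative = group} is no longer needed). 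Either way, the step as you state it is incomplete.

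Your (iv)$\Rightarrow$(i) takes a genuinely different, and arguably cleaner, route than the paper, which simply reruns the argument of \cite[Theorem 4.6]{CMFM} after noting the two-condition criterion for joint strong epimorphy. You instead transfer the whole problem along $\P$ to $\Mon$, using that groups are protomodular objects there, that $\P$ preserves the relevant pullbacks and kernels (Remark \ref{re:properties}(3)), and that the $\Grp$-component of the criterion is free since $\Grp$ is protomodular. This is sound: the reservation you raise yourself --- that $\P$ sends the pullback of a point over $Y$ along $g\colon C\to Y$ to the pullback of the corresponding point over $P_Y$ along $\P(g)$ --- is discharged by limit preservation, so your version of this implication closes without further work.
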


\begin{proof}
The equivalence between (iv) and (v) is obvious.

(iv) $\Rightarrow$ (i): since, in particular, every pair of morphisms in $\ROrdGrp$ with the same codomain is jointly strongly epimorphic in $\ROrdGrp$ provided that it is jointly strongly epimorphic in $\Grp$ and its restriction to the positive cones is jointly strongly epimorphic in $\Mon$, we can use the argument of \cite[Theorem 4.6]{CMFM}.

(i) $\Rightarrow$ (ii) follows from \cite[Proposition 7.2]{MRVdL objects}.

(ii) $\Rightarrow$ (iii) follows from \cite[Proposition 6.3]{MRVdL objects}, because \ROrdGrp\ is a regular category.

(iii) $\Rightarrow$ (iv): according to Lemma \ref{gregarious + cancellative = group}, we only need to show that $P_Y$ is a gregarious monoid. Suppose there is an element $b \in P_Y$ for which there are no $u, v \in M$ with $u + b + v = 0$. Let $X = \langle b \rangle$ be the subgroup of $Y$ generated by $b$, with the induced preorder, and $j \colon X \hookrightarrow Y$ the inclusion. As a (right-)preordered group, $X$ is isomorphic to $\mathbb{Z}$ with its usual order, namely $P_X = \{ nb \ | \ n \in \mathbb{N} \}$. Consider then the following right-hand side pullback in $\ROrdGrp$:
\[ \xymatrix{ Y \ar@{=}[d] \ar[r]^-{\langle 1, 0 \rangle} & Y \times X \ar[d]^{1 \times j} \ar@<-2pt>[r]_-{\pi_2} & X \ar@<-2pt>[l]_-{\langle j, 1 \rangle} \ar[d]^j \\
Y \ar[r]^-{\langle 1, 0 \rangle} & Y \times Y \ar@<-2pt>[r]_-{\pi_2} & Y. \ar@<-2pt>[l]_-{\langle 1, 1 \rangle} } \]
We show that the positive cone of $Y \times X$ (which is $P_Y \times P_X$) contains strictly the submonoid $P$ of $Y \times X$ generated by $\langle 1, 0 \rangle(P_Y)$ and $\langle j, 1 \rangle(P_X)$; this would prove that the upper point in the pullback above is not strong, contradicting the assumption. In particular, we show that $(0, b) \notin P$. The elements of $P$ are of the form
\[ (y_1, 0) + (n_1 b, n_1 b) + (y_2, 0) + (n_2 b, n_2 b) + \ldots +  (y_k, 0) + (n_k b, n_k b) \]
for some $k, n_i \in \mathbb{N}$, $y_i \in Y$. If $(0, b) \in P$, then we should have
\[ \begin{cases}
y_1 + n_1 b + y_2 + n_2 b + \ldots + y_k + n_k b = 0 \\
n_1 + n_2 + \ldots + n_k = 1
\end{cases} \]
but this is possible only if there is a unique $i \in \{1, \ldots k\}$ such that $n_i = 1$, while all the others $n_j$'s are $0$. So we would get
\[ y_1 + y_2 + \ldots + y_i + b + y_{i+1} + \ldots + y_k = 0, \]
which is against our assumption on $b$.
\end{proof}

\section{Split extensions in $\ROrdGrp$} \label{section split extensions}

Any split extension
\begin{equation}\label{eq:split}
\xymatrix{ X \ar[r]^k & A \ar@<-2pt>[r]_p & B \ar@<-2pt>[l]_s}
\end{equation}
 in $\ROrdGrp$ is in particular a split extension in $\Grp$. Hence $A$, as a group, is isomorphic to the semidirect product $X \rtimes_{\varphi} B$ w.r.t.
the action $\varphi$ of $B$ on $X$ given by $\varphi_b(x) =k^{-1}(s(b)+k(x)-s(b))$. Furthermore, in $\Grp$ the split extension \eqref{eq:split} is isomorphic to $\xymatrix{ X \ar[r]^-{\langle 1,0\rangle} & X\rtimes_\varphi B \ar@<-2pt>[r]_-{\pi_B} & B \ar@<-2pt>[l]_-{\langle 0,1\rangle}}$. Hence every split extension \eqref{eq:split} in $\ROrdGrp$ is isomorphic to a split extension of the form
\begin{equation}\label{eq:sdp}
\xymatrix{ X \ar[r]^-{\langle 1,0\rangle} & X\rtimes_\varphi B \ar@<-2pt>[r]_-{\pi_B} & B \ar@<-2pt>[l]_-{\langle 0,1\rangle}}
\end{equation}
where $X\rtimes_\varphi B$ is equipped with a preorder which makes \eqref{eq:sdp} a split extension in $\ROrdGrp$. We call such \emph{right-preorders} \emph{compatible}. We already know from \cite[Section 5]{CMFM} that, given a split extension \eqref{eq:sdp} in $\Grp$ with $X$ and $B$ preordered groups, there may be more than one compatible preorder in $X\rtimes_\varphi B$, and there may be none. It follows immediately that in $\ROrdGrp$ there is no uniqueness of compatible right-preorders. We will see next that, analogously, existence is not guaranteed.

As in $\OrdGrp$ (see \cite{CMFM, CR} for details), the positive cone $P$ of a compatible right-preorder must contain
\[P_\prod=P_X\times P_B=\{(x,b)\in X\rtimes_\varphi B\,|\,x\geq 0 \mbox{ and }b\geq 0\},\]
and must be contained in
\[P_\lex=\{(x,b)\in X\rtimes_\varphi B\,|\,b > 0\mbox{ or }(b\sim 0\mbox{ and }x\geq 0)\}.\]

\begin{prop}
If $P$ is a compatible cone in $X\rtimes_\varphi B$, then $P_\prod\subseteq P\subseteq P_\lex$.
\end{prop}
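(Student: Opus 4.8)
The plan is to prove the two inclusions $P_\prod \subseteq P$ and $P \subseteq P_\lex$ separately, using the fact that \eqref{eq:sdp} is a split extension in $\ROrdGrp$. The key structural facts I would exploit are these: $P$ is a submonoid of $X \rtimes_\varphi B$ (by the first Proposition of Section~\ref{section first properties}); the inclusion $k = \langle 1, 0\rangle$ and the section $s = \langle 0, 1\rangle$ are morphisms in $\ROrdGrp$, so they carry positive cones into $P$; and the projection $p = \pi_B$ is monotone, so $p(P) \subseteq P_B$.

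\begin{pf}
We first prove $P_\prod \subseteq P$. Since $k = \langle 1, 0 \rangle \colon X \to X \rtimes_\varphi B$ is a morphism in $\ROrdGrp$, we have $k(P_X) \subseteq P$, so $(x, 0) \in P$ for every $x \in P_X$. Similarly, since $s = \langle 0, 1 \rangle \colon B \to X \rtimes_\varphi B$ is a morphism in $\ROrdGrp$, we have $s(P_B) \subseteq P$, so $(0, b) \in P$ for every $b \in P_B$. As $P$ is a submonoid of $X \rtimes_\varphi B$, it is closed under the group operation; given $(x, b)$ with $x \in P_X$ and $b \in P_B$, we may write it as a product of an element of $k(P_X)$ and an element of $s(P_B)$. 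A short computation with the semidirect-product multiplication shows $(x, 0) + (0, b) = (x + \varphi_0(0), b) = (x, b)$, so $(x, b) \in P$. This gives $P_\prod \subseteq P$.

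For the inclusion $P \subseteq P_\lex$, take $(x, b) \in P$. Applying the monotone projection $p = \pi_B$ yields $b = p(x, b) \in p(P) \subseteq P_B$, so $b \geq 0$ in $B$. Now distinguish two cases according to whether $b > 0$ (that is, $b \in P_B$ but $-b \notin P_B$) or $b \sim 0$ (that is, both $b \in P_B$ and $-b \in P_B$). In the first case $(x, b) \in P_\lex$ immediately by definition. In the second case I must show $x \geq 0$, i.e. $x \in P_X$: the idea is that since $b \sim 0$ we can use $(0, -b) \in s(P_B) \subseteq P$ together with $(x, b) \in P$ and closure of $P$ under addition to produce an element of $P$ whose $B$-component is $0$, and then identify this element, via $k^{-1}$, with a positive element of $X$. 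Concretely, adding $(0,-b)$ on the appropriate side and simplifying the semidirect-product product lands us in $k(X) \cap P = k(P_X)$, forcing the $X$-component to be positive. Hence $(x, b) \in P_\lex$, completing the proof.
\end{pf}

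The main obstacle is the second case of the $P \subseteq P_\lex$ argument: one has to be careful with the semidirect-product multiplication and with which side the twisting action $\varphi$ acts on, since combining $(x,b)$ with $(0,-b)$ does not merely cancel the $B$-components but also transforms the $X$-component by $\varphi$. The correct bookkeeping is to verify that the resulting element lies in the kernel $k(X)$ and is positive, using that $P \cap k(X)$ corestricts to $P_X$ (which is exactly the statement that $k$ is a regular monomorphism, i.e. $P_X = k^{-1}(P)$); this last identity is what guarantees that a positive element supported on the kernel really comes from $P_X$.
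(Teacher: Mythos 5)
Your proposal is correct and follows essentially the same route as the paper's proof: $P_\prod\subseteq P$ from $k(P_X)\subseteq P$, $s(P_B)\subseteq P$ and closure of $P$ under addition, and $P\subseteq P_\lex$ from monotonicity of $\pi_B$ plus the computation $(x,b)+(0,-b)=(x+\varphi_b(0),0)=(x,0)\in P$ together with $P_X=k^{-1}(P)$ (the kernel condition). The bookkeeping you were worried about is harmless since $\varphi_b(0)=0$.
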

\begin{proof}
The morphism $\langle 1,0\rangle\colon X\to X\rtimes_\varphi B$ is a kernel in $\ROrdGrp$ if and only if $(x\in P_X\,\Leftrightarrow\,(x,0)\in P)$, while $\langle 0,1\rangle\colon B\to X\rtimes_\varphi B$ is monotone if and only if $(0,b)\in P$ whenever $b\in P_B$. Hence $P_\prod\subseteq P$. Moreover, monotonicity of $\pi_B$ gives that $b\in P_B$ whenever $(x,b)\in P$. If $b\sim 0$ then $-b\in P_B$, and so $(0,-b)\in P$; therefore, if $(x,b)\in P$, then $(x,0)=(x,b)+(0,-b)\in P$, hence $x\in P_X$.
\end{proof}

\begin{theorem}
For a split extension \eqref{eq:sdp} in $\Grp$ with $(X,P_X)$ and $(B,P_B)$ right-preordered groups, the following conditions are equivalent:
\begin{tfae}
\item $P_\lex$ is a compatible right-preorder;
\item there is a compatible right-preorder $P$;
\item $(\forall b\in P_B)\;(\exists b'\in P_B\;b+b'\sim 0)\Rightarrow \varphi_b \mbox{ monotone}$.
\end{tfae}
\end{theorem}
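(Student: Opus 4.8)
The plan is to establish the cycle (i)$\Rightarrow$(ii)$\Rightarrow$(iii)$\Rightarrow$(i), using throughout the previous proposition, which pins any compatible cone $P$ between $P_\prod$ and $P_\lex$. First I would record two reductions about the preorder on $B$. The one that simplifies the hypothesis of (iii): for $b\in P_B$, the existence of $b'\in P_B$ with $b+b'\sim 0$ is equivalent to $b\sim 0$. Indeed, from $b+b'\leq 0$ and right-compatibility, adding $-b'$ on the right gives $b\leq -b'$; since $b'\in P_B$ forces $-b'\leq 0$, transitivity yields $b\leq 0$, hence $b\sim 0$, while the converse is witnessed by $b'=-b$. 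The second reduction: again by right-compatibility, $b_1\sim 0$ implies $b_1+b_2\sim b_2$ for every $b_2$, so that the predicates ``$>0$'' and ``$\sim 0$'' are invariant along $\sim$-classes.

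The implication (i)$\Rightarrow$(ii) is immediate. For (ii)$\Rightarrow$(iii) I would take a compatible cone $P$ and a $b\in P_B$ satisfying the hypothesis of (iii), i.e. (by the first reduction) with $b\sim 0$; then $(0,b)$ and $(0,-b)$ lie in $P_\prod\subseteq P$. For any $x\in P_X$ one has $(x,0)\in P$, and, computing in the semidirect product, $(0,b)+(x,0)+(0,-b)=(\varphi_b(x),0)$ belongs to $P$ because $P$ is a submonoid. Since $P\subseteq P_\lex$ and the second coordinate is $0\sim 0$, membership in $P_\lex$ forces $\varphi_b(x)\in P_X$; thus $\varphi_b$ is monotone.

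The core is (iii)$\Rightarrow$(i). The conditions turning \eqref{eq:sdp} into a split extension in $\ROrdGrp$ — that $\langle 1,0\rangle$ is a kernel (equivalently $x\in P_X\Leftrightarrow(x,0)\in P_\lex$), that $\pi_B$ is monotone, and that $\langle 0,1\rangle$ is monotone — all follow at once from the shape of $P_\lex$. The real task is to check that $P_\lex$ is a submonoid, i.e. closed under $(x_1,b_1)+(x_2,b_2)=(x_1+\varphi_{b_1}(x_2),\,b_1+b_2)$. I would split on $b_1$. If $b_1>0$, then $b_1+b_2>0$: were $b_1+b_2\sim 0$ with $b_1,b_2\in P_B$, the first reduction would give $b_1\sim 0$, a contradiction; so the sum lies in $P_\lex$ whatever its first coordinate. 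If $b_1\sim 0$ and $b_2>0$, the second reduction gives $b_1+b_2\sim b_2>0$, and again the sum lies in $P_\lex$.

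The single case where (iii) is genuinely used — and the main obstacle — is $b_1\sim 0$ together with $b_2\sim 0$. Then $b_1+b_2\sim 0$, so membership of the sum in $P_\lex$ demands that its first coordinate $x_1+\varphi_{b_1}(x_2)$ be positive. By definition of $P_\lex$ one has $x_1\geq 0$ and $x_2\geq 0$, and (iii) applies precisely because $b_1\sim 0$, giving $\varphi_{b_1}(x_2)\in P_X$; as $P_X$ is a submonoid, $x_1+\varphi_{b_1}(x_2)\in P_X$, which closes the argument. I expect the only delicate points to be keeping every addition one-sided (the group need not be abelian, so only right-compatibility is at hand) and handling $-(b_1+b_2)=-b_2-b_1$ correctly in the reduction step.
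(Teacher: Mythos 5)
Your proof is correct and follows essentially the same route as the paper's: the cycle (i)$\Rightarrow$(ii)$\Rightarrow$(iii)$\Rightarrow$(i), with the only substantive point being closure of $P_\lex$ under addition when the second coordinates sum to something $\sim 0$. One difference is worth flagging. Your ``first reduction'' --- that for $b,b'\in P_B$ the condition $b+b'\sim 0$ forces $b\sim 0$ (and, symmetrically, $b'\sim 0$) --- is correct and only uses right-compatibility: from $b+b'\le 0$ one gets $b\le -b'$ by adding $-b'$ on the right, and from $0\le b'$ one gets $-b'\le 0$, whence $b\le 0$. This is precisely the equivalence that the paper's Remark (following the proposition on $P_\prod$) says the authors ``do not know''; your two-line argument settles it affirmatively. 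It also makes your treatment of the crucial case more complete than the published one: in the paper's proof of (iii)$\Rightarrow$(i), concluding $\varphi_b(y)\ge 0$ from $(y,b')\in P_\lex$ tacitly requires $y\ge 0$, i.e.\ that $b'\sim 0$ rather than $b'>0$, and concluding $x+\varphi_b(y)\ge\varphi_b(y)$ requires $x\ge 0$, i.e.\ $b\sim 0$; both are exactly what your reduction supplies. So your write-up is not a different proof, but it fills in the one step the paper leaves implicit.
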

\begin{proof}
(i) $\Rightarrow$ (ii) is obvious.

(ii) $\Rightarrow$ (iii): suppose $b,b'\in P_B$, and let $x\in P_X$; then $(0,b)+(x,b')=(\varphi_b(x),b+b')$ must belong to $P$. If $b+b'\sim 0$ then necessarily $\varphi_b(x)\geq 0$ as claimed.

(iii) $\Rightarrow$ (i): we need to show that $P_\lex$ is a submonoid of $X\rtimes_\varphi B$; let $(x,b),(y,b')\in P_\lex$; then $(x,b)+(y,b')=(x+\varphi_b(y),b+b')$ belongs trivially to $P_\lex$ when $b+b'\not\sim 0$; if $b+b'\sim 0$ then, by (iii), $\varphi_b(y)\geq 0$ and consequently $x+\varphi_b(y)\geq\varphi_b(y)\geq 0$.
\end{proof}

\begin{corollary}\label{cor:ord}
If $B$ is a right-ordered group (i.e., with an antisymmetric preorder), then the right-preorder $P_\lex$ is compatible.
\end{corollary}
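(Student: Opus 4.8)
The plan is to obtain this as an immediate specialization of the preceding theorem: by the equivalence proved there, it suffices to verify that condition (iii) holds for the split extension at hand, since then the implication (iii) $\Rightarrow$ (i) yields that $P_\lex$ is a compatible right-preorder. So I would not touch $P_\lex$ directly, but instead reduce everything to checking (iii).

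First I would translate the hypothesis that $B$ is right-ordered, i.e. that its preorder is antisymmetric. Antisymmetry means that on $B$ the equivalence $\sim$ coincides with equality; in particular, $b+b'\sim 0$ is the same as $b+b'=0$. Next I would inspect the antecedent of (iii): suppose $b\in P_B$ and that there exists $b'\in P_B$ with $b+b'\sim 0$, hence $b+b'=0$. Then $b'=-b$, so $b'\in P_B$ forces $-b\geq 0$, while $b\in P_B$ gives $b\geq 0$; antisymmetry then forces $b=0$. Thus the only $b\in P_B$ satisfying the hypothesis of (iii) is $b=0$.

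Finally, since $\varphi_0=\id_X$ is trivially monotone, the conclusion of (iii) is satisfied for this unique $b$, and it holds vacuously for every other $b\in P_B$; hence condition (iii) is verified, and the theorem gives that $P_\lex$ is compatible. There is essentially no obstacle here: the corollary is a direct reading of the theorem under antisymmetry. The only point requiring care is the interpretation of $b\sim 0$, which collapses the hypothesis of (iii) to the single case $b=0$, where the action map is automatically the identity and therefore monotone.
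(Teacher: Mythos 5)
Your proposal is correct and is precisely the argument the paper intends (the corollary is stated without proof as an immediate consequence of the theorem via condition (iii)): antisymmetry turns $b+b'\sim 0$ into $b+b'=0$, and together with $b,b'\in P_B$ and right-invariance this forces $b\sim 0$, hence $b=0$, so $\varphi_b=\id$ is monotone. The only step worth making explicit is that $-b\geq 0$ yields $b\leq 0$ by adding $b$ on the right, which is what licenses the appeal to antisymmetry.
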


One can also identify the split extensions admitting $P_\prod$ as a compatible cone.
\begin{prop}
For a split extension \eqref{eq:sdp} in $\Grp$, with $(X,P_X)$ and $(B,P_B)$ right-preordered groups, the following conditions are equivalent:
\begin{tfae}
\item $P_\prod$ is a compatible right-preorder.
\item $(\forall b\in P_B)\;\varphi_b$ is monotone.
\end{tfae}
\end{prop}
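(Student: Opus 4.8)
The plan is to reduce compatibility of the cone $P_\prod$ to a single closure condition and then read off monotonicity of the actions from it. First I would recall that, for a cone $P$ on $X\rtimes_\varphi B$, being compatible means exactly that $P$ is a submonoid (so that it defines a right-preorder) for which $k=\langle 1,0\rangle$, $p=\pi_B$ and $s=\langle 0,1\rangle$ are morphisms of $\ROrdGrp$ with $k$ a kernel. Concretely, using the analysis in the proof of the preceding proposition, this amounts to $x\in P_X\Leftrightarrow(x,0)\in P$, to $b\in P_B\Rightarrow(0,b)\in P$, and to $(x,b)\in P\Rightarrow b\in P_B$. For the specific choice $P=P_\prod=P_X\times P_B$ all three of these hold automatically, since $0\in P_X$ and $0\in P_B$. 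Hence the entire content of the statement is whether $P_\prod$ is closed under the group operation of $X\rtimes_\varphi B$.

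For (ii) $\Rightarrow$ (i) I would take $(x,b),(y,b')\in P_\prod$ and compute their product $(x,b)+(y,b')=(x+\varphi_b(y),b+b')$. Since $b,b'\in P_B$ and $P_B$ is a submonoid, the second component lies in $P_B$; since $b\in P_B$, hypothesis (ii) gives that $\varphi_b$ is monotone, which by the Remark characterizing monotone homomorphisms means $\varphi_b(P_X)\subseteq P_X$, so $\varphi_b(y)\in P_X$; finally $x\in P_X$ and $P_X$ a submonoid yield $x+\varphi_b(y)\in P_X$. Thus the product lies in $P_\prod$, so $P_\prod$ is a submonoid and hence a compatible cone.

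For the converse (i) $\Rightarrow$ (ii) I would specialize the closure property to the generators coming from the two factors: given $b\in P_B$ and $y\in P_X$, both $(0,b)$ and $(y,0)$ lie in $P_\prod$, so their product $(0,b)+(y,0)=(\varphi_b(y),b)$ must lie in $P_\prod$ as well, forcing $\varphi_b(y)\in P_X$. As $y\in P_X$ was arbitrary this gives $\varphi_b(P_X)\subseteq P_X$, i.e. $\varphi_b$ is monotone, which is (ii). The only point requiring care -- and the ``main obstacle'', though it is a mild one -- is the bookkeeping of the semidirect-product multiplication and the translation, via the monotonicity-equals-cone-preservation Remark, between ``$\varphi_b$ monotone'' and ``$\varphi_b(P_X)\subseteq P_X$''; once this dictionary is fixed, both implications are immediate one-line computations.
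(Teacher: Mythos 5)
Your proposal is correct and follows essentially the same route as the paper: the forward implication specializes closure of $P_\prod$ to the product $(0,b)+(y,0)=(\varphi_b(y),b)$, and the converse is the direct computation $(x,b)+(y,b')=(x+\varphi_b(y),b+b')$ with both components checked to lie in the respective cones. Your preliminary observation that the kernel and monotonicity conditions hold automatically for $P_\prod$, so that only submonoid closure is at stake, is left implicit in the paper but is a harmless (and clarifying) addition.
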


\begin{proof}
(i) $\Rightarrow$ (ii): If $b\in P_B$ and $x\in P_X$, then $(0,b)+(x,0)=(\varphi_b(x),b)\in P_\prod$, hence $\varphi_b(x)\in P_X$ and therefore $\varphi_b$ is monotone.

(ii) $\Rightarrow$ (i): Given $x,y\in P_X$ and $a,b\in P_B$, $(x,a)+(y,b)=(x+\varphi_a(y),a+b)\in P_\prod$ since $a+b\in P_B$ and $x+\varphi_b(y)\in P_X$ by (ii).
\end{proof}

\begin{remark}
We do not know whether $(b\geq 0\;\wedge\;\exists b'\geq 0\;b+b'\sim 0)$ is equivalent to $b\sim 0$. This is the case when $B$ is a preordered group.
\end{remark}

A comparison of Corollary \ref{cor:ord} above with Theorem 3.2 of \cite{CR} gives us the following

\begin{corollary}
When $B$ is an ordered group and $\varphi_b$ is not monotone for some $b\in B$, then $P_\lex$ -- and every possible compatible right-preorder on $X\rtimes_\varphi B$ -- makes it a right-preordered group but not a preordered one.
\end{corollary}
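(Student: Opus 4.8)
The plan is to separate the two assertions: that $P_\lex$ is an honest compatible right-preorder, and that neither $P_\lex$ nor any other compatible cone can be compatible on the left as well. The first is immediate. An ordered group has an antisymmetric preorder, so Corollary~\ref{cor:ord} applies verbatim and gives that $P_\lex$ is a compatible right-preorder on $X\rtimes_\varphi B$; combined with the preceding Proposition bounding every compatible cone by $P_\prod\subseteq P\subseteq P_\lex$, this shows that $X\rtimes_\varphi B$ equipped with any compatible $P$ is a right-preordered group. The antisymmetry of $B$ is exactly what is needed to invoke Corollary~\ref{cor:ord}.

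For the second assertion I would use the standard fact that a right-compatible cone $P$ yields a (two-sided) preordered group if and only if $P$ is closed under conjugation. Indeed, from $0\le p$ right-compatibility gives $-g\le p-g$, and a further left-compatibility step yields $0\le g+p-g$; conversely, conjugation-invariance of a right-compatible cone recovers left-compatibility. It therefore suffices to produce, for every compatible $P$, an element of $P$ whose conjugate escapes $P$.

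The witness is supplied by the failed monotonicity. Since a group homomorphism $\varphi_b\colon X\to X$ is monotone precisely when $\varphi_b(P_X)\subseteq P_X$, the non-monotonicity of some $\varphi_b$ yields $x\in P_X$ with $\varphi_b(x)\notin P_X$. Then $(x,0)\in P_\prod\subseteq P$, and a direct computation in the semidirect product gives
\[ (0,b)+(x,0)+(0,-b)=(\varphi_b(x),0). \]
Because $\langle 1,0\rangle$ is a kernel in $\ROrdGrp$, membership $(\varphi_b(x),0)\in P$ is equivalent to $\varphi_b(x)\in P_X$, which is false; hence the conjugate $(\varphi_b(x),0)$ lies outside $P$, so $P$ is not conjugation-closed and $X\rtimes_\varphi B$ with $P$ is not a preordered group.

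I expect no serious obstacle: the argument is a short unwinding of the semidirect-product operation together with the kernel characterization of compatible cones, and it is uniform in $P$, thereby covering $P_\lex$ and every other compatible cone simultaneously. It is worth noting that this half uses no antisymmetry whatsoever, the role of the ordered hypothesis being confined to securing existence of the right-preorder $P_\lex$ in the first step. The same conclusion --- non-existence of a compatible two-sided preorder whenever some $\varphi_b$ fails to be monotone --- is precisely what one reads off from Theorem~3.2 of~\cite{CR}, so the corollary is exactly the comparison of that result with Corollary~\ref{cor:ord}.
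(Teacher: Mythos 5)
Your proposal is correct, and it goes further than the paper does. The paper gives no argument beyond the one line preceding the corollary: the first assertion is Corollary~\ref{cor:ord}, and the second is read off from Theorem~3.2 of \cite{CR} (existence of a compatible two-sided preorder forces every $\varphi_b$ to be monotone), so the corollary is literally ``a comparison'' of the two results. You reproduce that citation route at the end, but you also supply a direct, self-contained proof of the second half that the paper omits: you reduce left-compatibility of a right-compatible cone to closure under conjugation, extract from the non-monotone $\varphi_b$ an $x\in P_X$ with $\varphi_b(x)\notin P_X$, and observe that $(0,b)+(x,0)+(0,-b)=(\varphi_b(x),0)$ lies outside every compatible cone $P$ because $P_\prod\subseteq P$ forces $(x,0)\in P$ while the kernel condition $x\in P_X\Leftrightarrow(x,0)\in P$ excludes $(\varphi_b(x),0)$. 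All three ingredients are already established in the paper (the bounding proposition, the kernel characterization, and the semidirect-product formula), so the computation is sound, and it is uniform in $P$ as required. What your route buys is independence from \cite{CR} and an explicit witness for the failure of left-compatibility; what the paper's route buys is brevity and the stronger packaged statement $\varphi_b\sim\id$ from \cite{CR}. Your closing remark that antisymmetry of $B$ is used only to secure existence of $P_\lex$, not for the non-existence half, is a correct and worthwhile observation.
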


Examples \ref{ex:rpo}(2,3) below are concrete instances of the situation described in the previous corollary.

\begin{examples}\label{ex:rpo}
\begin{enumerate}
\item As shown in \cite[Example 5.8]{CMFM}, the split extension
\[ \xymatrix{ (\mathbb{Z},\mathbb{N}) \ar[r]^-{\langle 1, 0 \rangle} & \mathbb{Z} \times \mathbb{Z} \ar@<-2pt>[r]_-{\pi_2} & (\mathbb{Z}, \mathbb{N}) \ar@<-2pt>[l]_-{\langle 0, 1 \rangle} }\]
has an uncountable number of compatible (right-)preorders (here every right-preorder is a preorder on $\bkZ\times\bkZ$ since the group is abelian).
\item Consider the split extension
\begin{equation}\label{eq:zz}\xymatrix{ (\mathbb{Z},P) \ar[r]^-{\langle 1, 0 \rangle} & \mathbb{Z} \rtimes_\varphi \mathbb{Z} \ar@<-2pt>[r]_-{\pi_2} & (\mathbb{Z}, \bkZ) \ar@<-2pt>[l]_-{\langle 0, 1 \rangle} }
\end{equation}
with $\varphi_b(x)=(-1)^b x$.

When $P=\bkN$ then $\varphi_b$ is not monotone when $b$ is an odd number (although every $b\sim 0$), hence there is no right-preorder making \eqref{eq:zz} a split extension in $\ROrdGrp$.

When $P=\{0\}$, $\varphi_b$ is trivially monotone, hence $P_\prod$, which coincides with $P_\lex$, is a compatible right-preorder. Note that there is no compatible preorder making $\bkZ\rtimes_\varphi \bkZ$ a preordered group, since the existence of such compatible preorder would imply that $\varphi_b\sim\id$ for every $b\in \bkZ$ (see \cite[Theorem 3.2]{CR}).

\item We can extend the previous situation, providing plenty of examples of right-(pre)ordered groups which are not (pre)ordered. Let us describe explicitly some of them. We recall that an action of a group $B$ on a group $X$ can be expressed as a group homomorphism from $B$ to the automorphism group $\mathrm{Aut}(X)$. When $X$ is the additive group of rationals, $\mathrm{Aut}(X) = (\mathbb{Q} \setminus \{0\}, \cdot)$. Hence, an action of $\mathbb{Z}$ on $\mathbb{Q}$, namely a group homomorphism $\mathbb{Z} \to \mathbb{Q} \setminus \{0\}$, is uniquely determined by the image of $1$, i.e. by a non-zero rational number $q$. Let us then consider the split extension
\[ \xymatrix{ (\mathbb{Q},\mathbb{Q}^+) \ar[r]^-{\langle 1, 0 \rangle} & \mathbb{Q} \rtimes_\varphi \mathbb{Z} \ar@<-2pt>[r]_-{\pi_2} & (\mathbb{Z}, \mathbb{N}) \ar@<-2pt>[l]_-{\langle 0, 1 \rangle} }\]
where $\varphi$ is the action determined by the non-zero rational number $q$, i.e. $\varphi_b(x) = q^b x$. The lexicographic preorder (actually, order) on $\mathbb{Q} \rtimes_\varphi \mathbb{Z}$ is always right-compatible, since $(\mathbb{Z}, \mathbb{N})$ is an ordered group (i.e. the preorder is antisymmetric), but when $q<0$ the map $\varphi_b$ is not monotone for all positive $b$, and so, according to \cite[Theorem 3.2]{CR}, there is no compatible preorder on $\mathbb{Q} \rtimes_\varphi \mathbb{Z}$.
\end{enumerate}
\end{examples}

\section{Further comments}

The passage from preordered groups to right-preordered groups as outlined here can be carried out to the more general context of $V$-groups, when $V$ is a commutative and unital quantale, as studied in \cite{CM}. We take this opportunity to point out that in the statement (ii) of Proposition 3.1 of our paper \cite{CM} a condition is missing. Indeed, a $V$-group is a group which has a $V$-category structure both left- and right-invariant under shifting, and this is the notion studied in \cite{CM}. Condition (ii) of Proposition 3.1 of \cite{CM} gives a notion of right-invariant $V$-group, or simply right-$V$-group, that may be interesting to explore in future work.

\section*{Acknowledgements}
The authors wish to express their gratitude to Vincenzo Marra for fruitful discussions on the topic of this paper, and to Alfredo Costa for suggesting us the proof of Lemma \ref{gregarious + cancellative = group}.

The first author acknowledges partial financial support by {\it Centro de Matemática da Universidade de Coimbra} (CMUC), funded by the Portuguese Government through FCT/MCTES, DOI 10.54499/UIDB/00324/2020.

The second author is member of the Gruppo Nazionale per le Strutture Algebriche, Geometriche e le loro Applicazioni (GNSAGA) dell'Istituto Nazionale di Alta Matematica ``Francesco Severi''.

\end{document}